\theoremstyle{plain}\newtheorem{definition}{Definition}[section]
\theoremstyle{definition}\newtheorem{theorem}{Theorem}[section]
\theoremstyle{plain}\newtheorem{lemma}[theorem]{Lemma}
\theoremstyle{plain}
\theoremstyle{plain}\newtheorem{prop}[theorem]{Proposition}
\theoremstyle{remark}\newtheorem{remark}{Remark}[section]
\newcommand{\wblue}[1]{\textcolor{black}{#1}}
\newcommand{\wred}[1]{\textcolor{black}{#1}}
\newcommand{\Div}{\mathrm{div}\,}
\newcommand{\B}{\Big}
\newcommand{\be}{\begin{equation}}
\newcommand{\ee}{\end{equation}}
 \newcommand{\ba}{\begin{aligned}}
 \newcommand{\ea}{\end{aligned}}
\providecommand{\bysame}{\leavevmode\hbox to3em{\hrulefill}\thinspace}
  \newcommand{\f}{\frac}
  \newcommand{\ben}{\begin{enumerate}}
   \newcommand{\een}{\end{enumerate}}
\newcommand{\ti}{\nabla}
\newcommand{\Rmnum}[1]{\expandafter\@slowromancap\romannumeral #1@}
\numberwithin{equation}{section}
\begin{document}
\title{On the box-counting dimension   of   potential singular set for suitable weak solutions to the 3D Navier-Stokes equations}
\author{Yanqing Wang\footnote{ Department of Mathematics and Information Science, Zhengzhou University of Light Industry, Zhengzhou, Henan  450002,  P. R. China Email: wangyanqing20056@gmail.com}\; and  Gang Wu\footnote{School of Mathematical Sciences,  University of Chinese Academy of Sciences, Beijing 100049, P. R. China Email: wugangmaths@gmail.com}}
\date{}
\maketitle
\begin{abstract}
In this paper, we are concerned with
  the  upper  box-counting  dimension of   the set of possible singular points in space-time  of suitable weak solutions to the 3D Navier-Stokes equations. By  taking full advantage of
   the pressure  $\Pi$ in terms of
$\nabla \Pi$  in equations, we show that
  this  upper  box dimension    is at most $135/104(\approx1.30)$, which   improves the
 known  upper box-counting dimension $95/63(\approx1.51)$ in
Koh et al. \cite[ J. Differential Equations, 261: 3137--3148, 2016]{[KY6]}, $45/29(\approx1.55)$ in
Kukavica    et al. \cite[Nonlinearity 25: 2775-2783, 2012]{[KP]} and  $135/82(\approx1.65)$ in  Kukavica
\cite[Nonlinearity 22: 2889-2900, 2009]{[Kukavica]}.

 \end{abstract}
\noindent {\bf MSC(2000):}\quad 35B65, 35D30, 76D05 \\\noindent
{\bf Keywords:} Navier-Stokes equations;  suitable  weak solutions; box-counting dimension \\
\section{Introduction}
\label{intro}
\setcounter{section}{1}\setcounter{equation}{0}
We consider  the following   incompressible Navier-Stokes system in dimension 3
\be\left\{\ba\label{NS}
&u_{t} -\Delta  u+ u\cdot\ti
u+\nabla \Pi=0, \\
&\Div u=0,\\
&u|_{t=0}=u_0,
\ea\right.\ee
 where $u $ stands for the flow  velocity field, the scalar function $\Pi=-\Delta^{-1}\partial_{i}\partial_{j}(u_{i}u_{j})$ represents the   pressure.   The
initial  velocity $u_0$ satisfies   $\text{div}\,u_0=0$.

In his \wblue{seminal} work \cite{[Leray]}, Leray constructed the global weak solutions to the tri-dimensional Navier-Stokes equations, namely, for a divergence--free vector field $u_0\in L^2(\mathbb{R}^{3})$, there exists a   weak solution \wblue{$(u,\,\Pi)$} such that $u\in  L^{\infty}(0,T;L^{2}(\mathbb{R}^{3}))\cap L^{2}(0,T;H^{1}(\mathbb{R}^{3})) $ to the system \eqref{NS}.  It follows from the   interpolation inequality that
$u\in L^{p}(0,T;L^{q}(\mathbb{R}^{3})) ~~  \text{with}~\f{2}{p}+\f{3}{q}=\f{3}{2} ,~~2\leq p\leq \infty.$
 Solonnikov  \cite{[Solonnikov]} and Giga and Sohr \cite{[GS]} proved the following regularity of the finite energy weak solutions
 \be\ba   &\nabla\Pi\in L_{t}^{s}L_{x}^{q}  ~~~\text{with} ~~~\f{2}{s}+\f{3}{q}=4, ~ ~~~  \Pi\in L_{t}^{s}L_{x}^{j}  ~~~\text{with} ~~~\f{2}{s}+\f{3}{j}=3,\ \\ &u\in L^{m}_{t}L^{n}_{x} ~~~~\text{with} ~~~ \f{2}{m}+\f{3}{n}=2,~~m\geq s,~~ n>q,~1<s,q<\infty.\label{GS}\ea\ee
A priori estimates of solutions to the Navier-Stokes system in $\mathbb{R}^{3}$ such as \wred{$u \in L^{1}_{t}L^{\infty}_{x} $, $(-\Delta)^{\f{\alpha}{2}}\nabla^{d}u\in L_{t}^{4/(d+\alpha+1) ,\infty}L_{x}^{4/(d+\alpha+1) ,\infty}$ ($\alpha\in[0,2),d\geq1$), $\nabla^{s}u\in L_{t}^{\f{2}{2s-1}}L^{2}_{x}$} can be found in \cite{[FGT],[CV],[Vasseur1],[Constantin]} and   references
therein.

However,
the  regularity of weak solutions mentioned above can not lead to the full
  regularity of weak solutions. The   partial regularity result of weak solutions is also originated from Leray in
  \cite{[Leray]}, where it \wblue{was} shown that  one dimension Lebesgue measure of the set of the potential time singular
  points   for the weak solutions to 3D Navier-Stokes equations is zero.
This result was improved by Scheffer in \cite{[Scheffer]}, where he  showed that the set of possible time singular points has 1/2-dimensional measure zero.
In 1970s,  Scheffer  \cite{[Scheffer1],[Scheffer2],[Scheffer3]} also
 considered  the potential space-time singular points  set of solutions to the   Navier-Stokes equations via introducing the suitable weak solutions and proved that the Hausdorff dimension of the space-times singular points set of   suitable weak solutions
of the 3D Navier-Stokes equations  is at most $5/3$. The so-called suitable weak solutions is a kind of weak solutions  meeting   the local energy inequality.  A point is said to be a regular point  to the Navier-Stokes equations \eqref{NS} if one
has the $L^{\infty} $ bound of $u$ in some neighborhood
of this point.  The remaining points are called  singular point and denoted by $\mathcal{S}$.
 In this direction, the celebrated
  work  is that   one
dimensional Hausdorff measure of the possible space-time singular points set of suitable
weak solutions to the 3D Navier-Stokes equations is zero, which \wblue{was} proved by
Caffarelli,  Kohn,   Nirenberg in \cite{[CKN]}.
The Caffarelli-Kohn-Nirenberg theorem completely
rests  on   the following $\varepsilon$-regularity criterion:
 there is an absolute constant \wblue{$\varepsilon$} such that, if
   \be\label{ckn}
  \limsup_{r\rightarrow0}\f{1}{r}\iint_{Q(r)}|\nabla u|^{2}dxdt
  \leq \wblue{\varepsilon},
  \ee
  then $(0,\,0)$ is a regular point, where
   $Q(r):=B(r)\times(-r^{2},0)$ and $B(r)$
   denotes the ball of center $0$ and radius $r$.
Alternative approaches to  Caffarelli-Kohn-Nirenberg theorem
 and its generalization were presented in    several works (see, e.g., \cite{[TY],[RWW],[Lin],[LS],[CL],
[CY1],[Kukavica],[KP],[RWW1],[Vasseur],[WZ],[TX],[WW],[RS3]}).

Notice that
Hausdorff dimension is a  kind of  fractal dimension. Another widely used fractal  dimension
is \wblue{box-counting} or box dimension (Minkowski dimension).
As pointed out by   Falconer in \cite{[Falconer]},
box dimension's popularity is largely due to its relative ease of mathematical calculation and
empirical estimation, the definition goes back at least to the 1930s and it has been
variously termed Kolmogorov entropy, entropy dimension, capacity dimension, metric dimension,
logarithmic density and information dimension.
 Extensive study on the  box dimension   can be found in
 \cite{[Falconer]}.
 The definition of box dimension is via \wblue{lower} box
dimension and upper box
dimension.
It is valid that Hausdorff dimension \wblue{is less than upper box dimension (see e.g. \cite{[Falconer]})}.
For convenience, in what follows,   box
dimension means the upper box
dimension.
The  fractal upper box
dimension  of potential time  singular points set of finite energy  weak solutions to the 3D Navier-Stokes system is at most $1/2$, which was proved by
Robinson  and   Sadowski in \cite{[RS1]}.
By means of  $\varepsilon$-regularity criterion proved  in  \cite{[Lin],[LS]},
Robinson  and   Sadowski \cite{[RS2]}
obtained that the  fractal upper box
dimension   of possible space-time singular points set  of suitable weak solutions to the  three-dimensional Navier-Stokes equations   is at most 5/3.
It \wblue{was} shown that the parabolic fractal dimension of the
singular points set is less than or equal to $135/82(\approx1.65)$ by Kukavica in  \cite{[Kukavica]}. Later,
  Kukavica and Pei \cite{[KP]} improved this
 upper box
dimension to $45/29(\approx1.55)$.
Very recently,   Koh and  Yang  \cite{[KY6]} proved that
the  fractal upper box
dimension  of the set of potential  singular points   of suitable weak solutions in the Navier-Stokes equations    is bounded by $95/63(\approx1.51)$.
 The purpose of this paper is to refine the fractal upper box
 dimension   of  potential  singular points set to system \eqref{NS}.
Before we state the main theorems of this paper, we present the definition of upper box-counting dimension.
\begin{definition}\label{defibox}
The (upper) box-counting dimension of a set $X$ is usually defined as
$$d_{\text{box}}(X)=\limsup_{\epsilon\rightarrow0}\f{\log N(X,\,\epsilon)}{-\log\epsilon},$$
where $N(X,\,\epsilon)$ is the minimum number of balls of radius $\epsilon$ required to cover $X$.
\end{definition}
\begin{theorem}\label{the1.1}
 The (upper) box dimension of the set of possible  singular points  in space-time of suitable weak solutions to the 3D Navier-Stokes equations \eqref{NS} is at most $135/104(\approx1.30).$
\end{theorem}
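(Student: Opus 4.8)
The plan is to follow the Robinson--Sadowski/Kukavica strategy — turn an $\varepsilon$-regularity criterion into a lower bound for a scale-invariant quantity valid at \emph{every} singular point and \emph{every} small scale, then sum these lower bounds over a Vitali cover — but to push the exponent down by carrying the pressure through its gradient $\nabla\Pi$ rather than through $\Pi$ itself. The engine of the whole argument is a single-scale $\varepsilon$-regularity lemma: there is an absolute $\varepsilon_0>0$ such that if, for one sufficiently small $r$,
\[
\frac{1}{r}\iint_{Q(r)}|\nabla u|^2\,dx\,dt+\frac{1}{r^{\gamma}}\iint_{Q(r)}|\nabla\Pi|^{5/4}\,dx\,dt\le\varepsilon_0,
\]
then $(0,0)$ is a regular point, with $\gamma=135/104$. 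In contrast to the Caffarelli--Kohn--Nirenberg criterion \eqref{ckn}, the decisive feature is that this holds at a \emph{single} scale (no $\limsup$), which is exactly what makes box-counting — rather than merely Hausdorff — estimates possible.

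First I would establish this lemma from the local energy inequality. The only term there not already controlled by $\iint|\nabla u|^2$ and the natural energy quantities is the pressure term $\iint \Pi\,u\cdot\nabla\phi$; the point is to integrate by parts and bound it through $\nabla\Pi$, invoking the Solonnikov--Giga--Sohr regularity \eqref{GS} — in particular the endpoint $s=q=5/4$ of $2/s+3/q=4$, giving $\nabla\Pi\in L^{5/4}_{t,x}$ together with the finer mixed norms — rather than through $\Pi$. Running the CKN-type iteration of the scaled energies $\tfrac1\rho\iint_{Q(\rho)}|\nabla u|^2$ across dyadic scales and absorbing the pressure contribution via its $\nabla\Pi$-bound produces a decay that closes at a single scale; tracking the scaling is what fixes the supercritical exponent $\gamma=135/104$, the scale-invariant value being $5/4=130/104$ and the excess $135/104-5/4=5/104$ being precisely the loss incurred in closing the iteration. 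This lemma is the main obstacle: everything downstream is bookkeeping, whereas here one must extract the sharpest admissible $\gamma$, and it is exactly the improvement of $\gamma$ over the exponents implicit in $95/63$, $45/29$ and $135/82$ that delivers the better dimension.

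With the lemma in hand the covering argument is routine. Taking the contrapositive, at every $z_0\in\mathcal{S}$ and every small $r$ one has, writing $Q_r(z_0)$ for the parabolic cylinder of radius $r$ centered at $z_0$,
\[
\iint_{Q_r(z_0)}|\nabla u|^2\ge \tfrac{\varepsilon_0}{2}\,r
\qquad\text{or}\qquad
\iint_{Q_r(z_0)}|\nabla\Pi|^{5/4}\ge \tfrac{\varepsilon_0}{2}\,r^{\gamma}.
\]
Fix $r$ and use the parabolic Vitali lemma to select a disjoint family $\{Q_r(z_i)\}_{i=1}^N$, $z_i\in\mathcal{S}$, whose fivefold dilates cover $\mathcal{S}$; split the indices according to which alternative holds. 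Summing over the first set and using $\nabla u\in L^2_{t,x}(Q_R)$ gives $N_1\le C\,r^{-1}$, while summing over the second and using $\nabla\Pi\in L^{5/4}_{t,x}(Q_R)$ gives $N_2\le C\,r^{-\gamma}$; both counts are read off directly by disjointness, with \emph{no} Hölder gain, since these integrands are only critically integrable — this is why the entire improvement must be loaded onto the exponent $\gamma$ of the lemma. Hence $N=N_1+N_2\le C\,r^{-\gamma}$.

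Finally I would convert this parabolic count into the Euclidean box-count of Definition \ref{defibox}. Each dilated cylinder $Q_{5r}(z_i)=B(5r)\times(t_i-25r^2,t_i)$ has temporal length $25r^2\ll r$, hence lies in a space-time Euclidean ball of radius $\le 6r$; the $N$ such balls cover $\mathcal{S}$, so $N(\mathcal{S},6r)\le N\le C r^{-\gamma}$, and taking $\limsup$ yields $d_{\mathrm{box}}(\mathcal{S})\le\gamma=135/104$. The only remaining points requiring care are the harmless overlap constant from the Vitali selection and the verification that $u$, $\nabla u$ and $\nabla\Pi$ have the stated interior integrability on a fixed cylinder $Q_R$ around any would-be singular point, which follows from the suitable-weak-solution framework together with \eqref{GS}.
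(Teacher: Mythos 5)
Your overall architecture --- a one-scale $\varepsilon$-regularity statement whose contrapositive is summed over a disjoint family of parabolic cylinders centred at singular points, then converted into a Euclidean box-count --- is exactly the paper's (the paper uses $\epsilon_j$-separated points rather than a Vitali family, but that is equivalent bookkeeping), and your counting-by-disjointness and the cylinder-to-ball conversion are fine. The genuine gap is in the key lemma, which you rightly call ``the main obstacle'' but then mis-state and do not prove. The paper's Theorem \ref{the1.2} requires \emph{four} quantities to be small, $\iint_{\tilde{Q}(r)}\bigl(|\nabla u|^2+|u|^{10/3}+|\Pi-\overline{\Pi}_{\tilde{B}(r)}|^{5/3}+|\nabla\Pi|^{5/4}\bigr)dxdt\le r^{135/104}\varepsilon_1$, and the two terms you drop are load-bearing: the local energy inequality step \eqref{loc1.2} bounds the cubic and pressure fluxes by $\|u\|_{L^{10/3}(Q(2\rho))}$, $\|\Pi-\overline{\Pi}_{2\rho}\|_{L^{5/3}(Q(2\rho))}$ and $\|\nabla\Pi\|_{L^{5/4}(Q(2\rho))}$, and without the $|u|^{10/3}$ and $|\Pi-\overline{\Pi}|^{5/3}$ hypotheses one cannot close that inequality to get $E(u,\rho)\le C\varepsilon_1^{3/5}\rho^{-3\gamma/5}$, which is the indispensable input to the interpolation \eqref{inter2} and to the iterated pressure decay \eqref{refer} that eventually feed Proposition \ref{Lin}. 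A hypothesis containing only $\frac1r\iint|\nabla u|^2$ and $r^{-\gamma}\iint|\nabla\Pi|^{5/4}$ gives no local control of $\sup_t\int|u|^2$ whatsoever, so $E_3(u,\cdot)$ cannot be reached; your sketch (``integrate by parts and bound the pressure term through $\nabla\Pi$\dots the iteration closes at a single scale'') skips precisely this point.

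Moreover, the weight you assign to the gradient term is wrong in a way that matters. The paper needs the \emph{supercritical} one-scale bound $\iint_{Q(r)}|\nabla u|^2\,dxdt\le r^{135/104}\varepsilon_1$ (an excess of $31/104$ over the scale-invariant exponent $1$), not the scale-invariant $\frac1r\iint_{Q(r)}|\nabla u|^2\le\varepsilon_0$; a single-scale criterion at the scale-invariant weight for $|\nabla u|^2$ is exactly what is \emph{not} available (this is why \eqref{ckn} carries a $\limsup$), and adding a small $\nabla\Pi$ term does not rescue it. Nor is $135/104$ ``the loss incurred in closing the iteration'' on top of $5/4$: it arises as $5/3-\gamma$ with $\gamma<115/312$, where $\gamma$ is obtained from a three-way optimization over the parameters $\alpha,\beta,N$ in the scale hierarchy $r=\rho^\alpha=\theta^N\mu$, $\theta=\rho^\beta$, balancing the interpolation \eqref{inter2}, the iterated pressure estimate \eqref{refer}, and the constraint $r_N\le\rho$. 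None of this is recoverable from your outline, so the proposal reduces the theorem to a lemma that is both unproved and, as stated, not reachable by the method you describe.
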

\begin{remark}
This improves the previous box dimension of   potential singular set   of suitable weak solutions to the  tri-dimensional Navier-Stokes equations obtained in \cite{[KP],[RS2],[Kukavica],[KY6]}.
\end{remark}
As   calculating the Hausdorff dimension of singular points set of suitable weak solutions to the Navier-Stokes equations,
the  size   of  	potential  singular points set  in terms of box-counting measure strongly depends on the
$\varepsilon$-regularity criterion. To show Theorem \ref{the1.1}, we prove the following result.
 \begin{theorem}\label{the1.2}
Suppose that the pair $(u, \,\Pi)$ is a suitable weak solution to (\ref{NS}).  Then $|u|$   can be bounded by $1$ in some neighborhood of $(x_{0},\,t_{0})$
 provided the following condition holds,
\be\label{cond}\ba  \wblue{\iint_{\tilde{Q} (r)}
|\nabla u |^{2} +| u |^{ 10/3}+|\Pi-\overline{\Pi}_{\tilde{B}(r)} |^{ 5/3}+
 |\nabla \Pi| ^{5/4}dxdt \leq    r^{135/104}\varepsilon_{1},} \ea\ee
for an absolute constant $\varepsilon_{1}>0.$
\end{theorem}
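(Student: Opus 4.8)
The plan is to establish \eqref{cond} by a Caffarelli--Kohn--Nirenberg type iteration built on the dimensionless quantities naturally attached to the energy class. After translating $(x_0,t_0)$ to the origin, I would introduce, for $0<\rho\le r$,
\[
A(\rho)=\sup_{-\rho^2<t<0}\frac1\rho\int_{B(\rho)}|u|^2,\qquad
E(\rho)=\frac1\rho\iint_{Q(\rho)}|\nabla u|^2,\qquad
C(\rho)=\frac1{\rho^{5/3}}\iint_{Q(\rho)}|u|^{10/3},
\]
\[
D(\rho)=\frac1{\rho^{5/3}}\iint_{Q(\rho)}|\Pi-\overline{\Pi}_{B(\rho)}|^{5/3},\qquad
F(\rho)=\frac1{\rho^{5/4}}\iint_{Q(\rho)}|\nabla\Pi|^{5/4}.
\]
Each of these is scale invariant under $u\mapsto\lambda u(\lambda x,\lambda^2 t)$, and the exponents $10/3,\,5/3,\,5/4$ are mutually compatible with $u\in L^\infty_tL^2\cap L^2_tH^1$ through the parabolic embedding, the Calder\'on--Zygmund relation $\Pi\sim u\otimes u$, and the identity $\tfrac{1}{10/3}+\tfrac12=\tfrac{1}{5/4}$ for $\nabla\Pi\sim u\,\nabla u$. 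The goal is to force a scale-invariant regularity quantity to be small, so that the $\varepsilon$-criterion \eqref{ckn} applies and the origin is regular, whence $|u|\le1$ nearby.

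First I would record the local energy inequality satisfied by the suitable weak solution, with a cutoff $\phi$ adapted to $Q(\rho)\subset Q(r)$, namely $\int_{B}|u|^2\phi+2\iint|\nabla u|^2\phi\le\iint|u|^2(\partial_t+\Delta)\phi+\iint(|u|^2+2\Pi)(u\cdot\nabla\phi)$. H\"older together with the parabolic Sobolev embedding controls the left side by the right: the convection term is absorbed into $C$, while for the pressure term I would subtract the mean $\overline{\Pi}_{B}$. One may integrate by parts, using $\Div u=0$, to exhibit $-\iint\phi\,u\cdot\nabla\Pi$; however, since energy-class $u$ lies only in $L^{10/3}$ and not in $L^{5}$ (the dual of $L^{5/4}$), it is more efficient to keep the oscillation $\Pi-\overline{\Pi}$ here and bound $\rho^{-1}\|\Pi-\overline\Pi\|_{L^{5/3}}\|u\|_{L^{5/2}}$ by $D$ and $C$. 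This yields an estimate of the schematic form $A(\rho)+E(\rho)\lesssim\Psi\big(C(r),D(r)\big)$, complemented by the Gagliardo--Nirenberg bound $C(\rho)\lesssim A(\rho)^{2/3}E(\rho)$, which closes the velocity side in terms of the pressure.

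Second, and this is where $\nabla\Pi$ is exploited, I would prove a decay estimate \emph{at the level of the gradient of the pressure}. From $-\Delta\Pi=\partial_i\partial_j(u_iu_j)$, split $\Pi=\Pi_{\mathrm{loc}}+\Pi_{\mathrm{har}}$ on $B(r/2)$, with $\Pi_{\mathrm{loc}}$ the Calder\'on--Zygmund potential of the localized $u\otimes u$ and $\Pi_{\mathrm{har}}$ harmonic. Calder\'on--Zygmund theory bounds $\|\nabla\Pi_{\mathrm{loc}}\|_{L^{5/4}}$ by the $u$-quantities $A,E,C$, while the mean-value property makes each component of $\nabla\Pi_{\mathrm{har}}$ harmonic, giving the interior decay $F_{\mathrm{har}}(\rho)\lesssim(\rho/r)^{\sigma}F(r)$ with a \emph{large} power $\sigma>0$ — larger than the corresponding exponent for $\Pi$ itself, precisely because $\nabla\Pi$ is one derivative smoother; this is the source of the improvement over \cite{[KP],[Kukavica],[KY6]}. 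I would then recover the oscillation quantity from the gradient one via the Sobolev--Poincar\'e embedding $W^{1,5/4}\hookrightarrow L^{15/7}\hookrightarrow L^{5/3}$ on $B(\rho)$, obtaining $D(\rho)\lesssim\rho^{\,2/5}$-weighted powers of $F(\rho)$. Altogether this gives $F(\rho)+D(\rho)\lesssim(\rho/r)^{\sigma}\big(F(r)+D(r)\big)+(r/\rho)^{\tau}\Phi\big(A(r),E(r),C(r)\big)$.

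Finally I would feed the energy and pressure estimates into a single inequality for the master quantity
\[
\mathcal{G}(\rho)=\frac1{\rho^{135/104}}\iint_{Q(\rho)}
\Big(|\nabla u|^2+|u|^{10/3}+|\Pi-\overline{\Pi}_{B(\rho)}|^{5/3}+|\nabla\Pi|^{5/4}\Big),
\]
whose value at $\rho=r$ is exactly the hypothesis $\mathcal{G}(r)\le\varepsilon_1$, and run the iteration on $\rho_k=\theta^k r$: choosing $\theta$ small to beat the decay factors and then $\varepsilon_1$ small to absorb the superlinear (nonlinear) contributions, one shows $\mathcal{G}(\theta^k r)\to0$, in particular $E(\theta^k r)\to0$, which verifies \eqref{ckn} and gives regularity, hence $|u|\le1$ in a neighborhood. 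The hard part will be the exponent bookkeeping: the three summands in \eqref{cond} carry three distinct natural scalings ($\rho^{1},\rho^{5/3},\rho^{5/4}$), so the single normalizing power $135/104$ must be the simultaneous balance point of the energy gain, the Calder\'on--Zygmund loss $\tau$, and the harmonic decay $\sigma$; verifying that this balance actually closes — that no term is left uncontrolled and that the optimum lands exactly at $135/104$ — is the delicate, computation-heavy core of the argument, and it is here that replacing $\Pi$ by $\nabla\Pi$ lowers the attainable exponent.
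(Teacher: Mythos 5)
Your overall architecture has a gap that I do not think can be repaired as written: you propose to iterate the master quantity $\mathcal{G}(\rho)=\rho^{-135/104}\iint_{Q(\rho)}\big(|\nabla u|^{2}+\cdots\big)$ over geometric scales $\rho_{k}=\theta^{k}r$ and to conclude via the Caffarelli--Kohn--Nirenberg criterion \eqref{ckn}. The obstruction is that $\mathcal{G}$ normalizes the gradient term \emph{supercritically}: the scale-invariant normalization of $\iint_{Q(\rho)}|\nabla u|^{2}$ is $\rho^{-1}$, so the gradient component of $\mathcal{G}$ equals $\rho^{-31/104}E_{\ast}(\nabla u,\rho)$. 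The local energy inequality \eqref{loc} is a critical estimate: the best it yields is $\iint_{Q(\rho)}|\nabla u|^{2}\lesssim \rho^{-2}\iint_{Q(2\rho)}|u|^{2}+(\text{cubic terms})\lesssim \rho\,A(2\rho)+\cdots$, so the iteration step $\mathcal{G}(\theta\rho)\le \tfrac{1}{2}\mathcal{G}(\rho)+\text{small}$ would force $A(\rho)\lesssim\rho^{31/104}\to 0$, i.e.\ vanishing of the scale-invariant local energy at the point --- essentially the regularity one is trying to prove, and not derivable from \eqref{cond}, which is assumed at the \emph{single} scale $r$ only (this single-scale character is what the covering argument in Theorem \ref{the1.1} requires). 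Concretely, at the very first step H\"older gives only $\rho^{-2}\iint_{Q(\rho)}|u|^{2}\le C\big(\iint_{Q(\rho)}|u|^{10/3}\big)^{3/5}\le C\rho^{81/104}\varepsilon_{1}^{3/5}$, and $81/104<135/104$, so the gradient component of $\mathcal{G}$ cannot be propagated even once as $\rho\to0$; likewise \eqref{ckn} demands smallness at \emph{all} small scales, which a one-scale hypothesis cannot supply by this route. The paper's proof is structured precisely to avoid this: it applies the local energy inequality \emph{once} to get the critical bound $E(u,\rho)\le C\varepsilon_{1}^{3/5}\rho^{-3\gamma/5}$, iterates only the \emph{linear} pressure-decay estimate of Lemma \ref{presure}, passes in one step from the hypothesis scale to a single much smaller scale $r=\rho^{\alpha}$ via the interpolation \eqref{inter2}, and then invokes the one-scale $\varepsilon$-regularity criterion of Proposition \ref{Lin} in terms of $E_{3}+P_{3/2}$, not \eqref{ckn}.

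Two secondary points. First, your heuristic for why $\nabla\Pi$ helps is backwards: for a harmonic function $h$ the $L^{p}(B(\rho))$ norm of the oscillation $h-\overline{h}_{\rho}$ decays \emph{faster} (by an extra factor $\rho^{p}$) than that of $\nabla h$, so decomposing at the level of $\nabla\Pi_{\mathrm{har}}$ gains nothing over the oscillation; the actual mechanism in the paper is that, since only $\nabla\Pi$ enters the equation, one may replace $\Pi$ by $\Pi-\overline{\Pi}_{\Omega}$ throughout and use Seregin's decay Lemma \ref{presure} for the oscillation, while the globally finite quantity $\|\nabla\Pi\|^{5/4}_{L^{5/4}}$ (with the favorable scaling exponent $5/4<5/3$) is injected through the interpolation $P_{3/2}(\Pi,\mu)\le CP^{2/5}_{5/4}(\nabla\Pi,\mu)P^{3/5}_{5/3}(\Pi,\mu)$. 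Second, the velocity interpolation you quote, $C(\rho)\lesssim A^{2/3}E$, is not the one that produces $135/104$: as Remark \ref{r1.4} explains, the exponent hinges on \eqref{inter2}, which spends the full power $E_{\ast}(\nabla u,\rho)^{1}$ of the gradient term; with the standard interpolation \eqref{inter1} the same scheme only reaches $180/131$.
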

\wblue{The notations used here can be found at the end of this section.}

\wblue{Next, we give} several remarks about this theorem.
\begin{remark}
Theorem \ref{the1.2} is an improvement of corresponding results
proved by Kukavica and  Pei  in \cite{[KP]}.
\end{remark}
\begin{remark}
The proof of \wblue{Theorem} \ref{the1.2} follows an approach utilized
in \cite{[KY6]}.  Compared with the proof of Koh and  Yang \cite{[KY6]},  first,
 notice that the appearance of the pressure in equations is in terms of
$\nabla \Pi$ rather than $\Pi$, thence, we
can always replace $\Pi$  by $\Pi-  \overline{\Pi  }_{\Omega}$. Based on this, we could
  make full use of  better decay estimate of pressure $\Pi-\overline{\Pi  }_{\Omega}$   than that of
\cite{[KY6]} (see Lemma \ref{presure} in Section \ref{sec2}). This is \wblue{partially motivated by the recent work in
\cite{[RWW]}, where} the authors used this trick
  to study the
partial regularity of suitable weak solutions to the
 Navier-Stokes equations with  fractional dissipation $(-\Delta)^{\alpha}$ in the case $\alpha=3/4$. Second,
  in the spirit of \cite{[KP]},
  we  will utilize the quantity $\|\nabla\Pi\|^{5/4}_{L^{5/4}_{t,x}}$  bounded by the
initial energy as widely as possible since the scaling of
  $\|\nabla\Pi\|^{5/4}_{L^{5/4}_{t,x}}$  is  better than that of $\|\Pi\|^{5/3}_{L^{5/3}_{t,x}}$.
This
enables us to obtain that box dimension    is at most $180/131(\approx1.37)$.
\end{remark}
\begin{remark}\label{r1.4}
Based on referees'  crucial  comments  	involving inequality \eqref{refer} and a very recent literature   \cite{[RWW1]}, we  realized that there  	exists an   appropriate  	 interpolation inequality   \eqref{inter2} for estimating the box dimension. In comparison with inequality   \eqref{inter1} of Koh and  Yang in \cite{[KY6]},
the inequality \eqref{inter2} allows us to take full advantage of  quantity  $ \|\nabla u\|^{2}_{L^{2}(Q(r))} $ in  \eqref{cond}. Though both $ \|u\|^{2}_{L^{\infty,2}(Q(r))}$ and $ \|\nabla u\|^{2}_{L^{2}(Q(r))} $ have the same scaling,  $ \|u\|^{2}_{L^{\infty,2}(Q(r))}$ appearing in \eqref{cond} seems to be inappropriate in discussion of  estimating the singular points set, that is,   $ \|\nabla u\|^{2}_{L^{2}(Q(r))} $  is  more useful  than $ \|u\|^{2}_{L^{\infty,2}(Q(r))}$ in our arguments. This further helps  us to improve the box dimension from  $180/131$ to   $135/104$.\end{remark}
\begin{remark}
To the knowledge of the authors, the strategy that one applies
the interior
estimate of harmonic function to the pressure $\Pi-  \overline{\Pi  }_{\Omega}$ is due to \cite[Lemma 2.1, p.222]{[Seregin]}.     Lemma \ref{presure} proved in \cite{[Seregin]} combined with the proof
 of \cite{[KY6]} yields that the upper box dimension
 is bounded by
  $635/441(\approx1.44).$
\end{remark}
\begin{remark}
A combination of the idea of \cite{[KY6]} and the quantity $\|\nabla\Pi\|^{5/4}_{L^{5/4}_{t,x}}$ as \cite{[KP]}  implies that,  the upper box dimension of singular points set is at most $75/51(\approx1.47)$.
\end{remark}
\begin{remark}
The upper bound on the Hausdorff dimension of the
potential space-time singular points of suitable weak solutions to the
generalized Navier-Stokes equations with $(-\Delta)^{\alpha}$ in the case  $3/4\leq\alpha<1$ is obtained in \cite{[TY],[RWW]}. An
interesting issue is
to estimate the
 box-dimension of singular points set of suitable weak solutions   of the fractional Navier-Stokes  equations  in the same case.
\end{remark}

With \wblue{Theorem \ref{the1.2} in hand, we can} present the proof of Theorem \ref{the1.1}.
\begin{proof}[Proof of Theorem \ref{the1.1}]
It follows from the definition of box-counting dimension that if $\delta<d_{\text{box}}(\mathcal{S})$, there exists a sequence $\epsilon_{j}\rightarrow0$ such that
$$N(\mathcal{S},\,\epsilon_{j})>\epsilon_{j}^{-\delta}.$$
We proceed by contradiction below. We assume that \wblue{$d_{\text{box}}(\mathcal{S})>135/104 $}, then we can    choose a constant $d$ such that \wblue{$180/131<d<d_{\text{box}}(\mathcal{S})$}.  Thus, there exists a decreasing sequence $\epsilon_{j}\rightarrow0$ such that
$$
N(\mathcal{S},\,\epsilon_{j})>\epsilon_{j}^{-d}.$$ Let ${(x_{i},t_{i})}^{N(\mathcal{S},\epsilon_{j})}_{i=1}$ be a collection of $\epsilon_{j}$-separated points in $\mathcal{S}$. According to Theorem \ref{the1.2}, for any $(x_{i},t_{i})\in \mathcal{S}$, we get
$$\ba
\wblue{\int_{t_{i}-\epsilon_{j}^{2}}^{t_{i}}
\int_{\tilde{B}_{i} ( \epsilon_{j})}
|\nabla u |^{2} +| u |^{ 10/3}+|\Pi-\overline{\Pi}_{\tilde{B}_{i}(\epsilon_{j})}|^{ 5/3}+
 |\nabla \Pi| ^{5/4}dxdt
> \epsilon_{j}^{135/104} \varepsilon_{1},} \ea$$
where $\tilde{B}_{i}(\mu):=B(x_{i}, \mu)$.\\
Combining the estimates above, we conclude   that
 $$\ba \sum^{N(\mathcal{S},\,\epsilon_{j})}_{i=1 }\int_{t_{i}-\epsilon_{j}^{2}}^{t_{i}}
\int_{\tilde{B}_{i} ( \epsilon_{j})}
|\nabla u |^{2} +| u |^{ 10/3}+| \Pi-\overline{\Pi}_{\tilde{B}_{i}(\epsilon_{j})} |^{ 5/3}+
 |\nabla \Pi| ^{5/4}dxdt
> \epsilon_{j}^{135/104-d } \varepsilon_{1}.  \ea$$
With the help of \eqref{GS}, we know  that  the left hand side of the above equality is bounded by   the initial data with finite-kinetic energy. Since \wblue{$d>135/104$, we get a contradiction as $j\rightarrow\infty$ and thus complete} the proof of Theorem \ref{the1.1}.
\end{proof}

The remainder of this paper unfolds as follows.
In Section  2, we recall the definition of suitable weak solutions to the Navier-Stokes equations.
  Then we  will list some
\wred{crucial} bounds for the scaling   invariant quantities. Section 3 contains the proof of Theorem \ref{the1.2}.

\noindent
{\bf Notations:} Throughout this paper, we denote
\begin{align*}
     &B(x,\mu):=\{y\in \mathbb{R}^{3}||x-y|\leq \mu\}, && B(\mu):= B(0,\mu), && \tilde{B}(\mu):=B(x_{0},\,\mu),\\
     &Q(x,t,\mu):=B(x,\,\mu)\times(t-\mu^{2}, t),  && Q(\mu):= Q(0,0,\mu), && \tilde{Q}(\mu):= Q(x_{0},t_{0},\mu).
\end{align*}
The classical Sobolev norm $\|\cdot\|_{H^{s}}$  is defined as   $\|f\|^{2} _{{H}^{s}}= \int_{\mathbb{R}^{n}} (1+|\xi|)^{2s}|\hat{f}(\xi)|^{2}d\xi$, $s\in \mathbb{R}$.
  We denote by  $ \dot{H}^{s}$ homogenous Sobolev spaces with the norm $\|f\|^{2} _{\dot{H}^{s}}= \int_{\mathbb{R}^{n}} |\xi|^{2s}|\hat{f}(\xi)|^{2}d\xi$.
 For $q\in [1,\,\infty]$, the notation $L^{q}(0,\,T;\,X)$ stands for the set of measurable functions on the interval $(0,\,T)$ with values in $X$ and $\|f(t,\cdot)\|_{X}$ belongs to $L^{q}(0,\,T)$.
   For simplicity,   we write
$$\wblue{\|f\| _{L^{q,\,\ell}(Q(\mu))}:=\|f\| _{L^{q}(-\mu^{2},\,0;\,L^{\ell}(B(\mu)))}~~~\text{ and}~~~~
  \|f\| _{L^{q}(Q(\mu))}:=\|f\| _{L^{q,\,q}(Q(\mu))}.} $$

 Denote
  the average of $f$ on the set \wblue{$\Omega$ by
  $\overline{f}_{\Omega}$}. For convenience,
  $\overline{f}_{r}$ represents  $\overline{f}_{B(r)}$.
  We will use the summation convention on repeated indices.
 $C$ is an absolute constant which may be different from line to line unless otherwise stated in this paper.

 \section{ Preliminaries}\label{sec2}
\setcounter{section}{2}\setcounter{equation}{0}

We start with the definition of the suitable weak solution to the Navier-Stokes equations \eqref{NS}.
\begin{definition}\label{defi}
A  pair  \wblue{$(u, \,\Pi)$} is called a suitable weak solution to the Navier-Stokes equations \eqref{NS} provided the following conditions are satisfied,
\begin{enumerate}[(1)]
\item $u \in L^{\infty}(-T,\,0;\,L^{2}(\mathbb{R}^{3}))\cap L^{2}(-T,\,0;\,\dot{H}^{1}(\mathbb{R}^{3})),\,\wblue{\Pi}\in
L^{3/2}(-T,\,0;L^{3/2}(\mathbb{R}^{3})).$\label{SWS1}
 \item$(u, ~\Pi)$~solves (\ref{NS}) in $\mathbb{R}^{3}\times (-T,\,0) $ in the sense of distributions;\label{SWS2}
 \item$(u, ~\Pi)$ satisfies the following inequality, for a.e. $t\in[-T,0]$,
 \begin{align}
 &\int_{\mathbb{R}^{3}} |u(x,t)|^{2} \phi(x,t) dx
 +2\int^{t}_{-T}\int_{\mathbb{R} ^{3 }}
  |\nabla u|^{2}\phi  dxds\nonumber\\ \leq&  \int^{t}_{-T }\int_{\mathbb{R}^{3}} |u|^{2}
 (\partial_{s}\phi+\Delta \phi)dxds
  + \int^{t}_{-T }
 \int_{\mathbb{R}^{3}}u\cdot\nabla\phi (|u|^{2} +2\Pi)dxds, \label{loc}
 \end{align}
 where non-negative function $\phi(x,s)\in C_{0}^{\infty}(\mathbb{R}^{3}\times (-T,0) )$.\label{SWS3}
\end{enumerate}
\end{definition}
In the light of the natural
scaling property of Navier-Stokes  equations,
we introduce the following dimensionless quantities:
\wred{\begin{align}&E(u,\,\mu)=\mu^{-1}\|u\|^{2}_{L^{\infty,2}(Q(\mu))},
&E&_{\ast}(\nabla u,\,\mu)=\mu^{-1}\|\nabla u\|^{2}_{L^{2}(Q(\mu))}
,  \nonumber\\
&E_{p}(u,\,\mu)=\mu^{p-5}\|u\|^{p}_{L^{p}(Q(\mu))}
,&P&_{5/4}(\nabla\Pi,\,\mu)= \mu^{-5/4}
\|\nabla\Pi\|^{5/4}_{L^{5/4}(Q(\mu))},\nonumber
\\
&P_{3/2}\Big(\Pi,\,\mu\Big)= \mu^{ -2}
\Big\|\Pi-\overline{\Pi}_{B(\mu)}\Big\|^{3/2}_{L^{3/2}(Q(\mu))},
&P&_{5/3}\Big(\Pi,\,\mu\Big)= \mu^{ -5/3}
\Big\|\Pi-\overline{\Pi}_{B(\mu)}\Big\|^{5/3}_{L^{5/3}(Q(\mu))}.
\nonumber
\end{align}}

We recall the following lemma involving interpolation inequality. It is worth pointing out that
the first part   slightly improves the
 corresponding  result obtained in \cite{[CKN],[Lin],[LS]}.
 As  stated  in Remark \ref{r1.4}, we will utilize the second one, which was usually used for
 improving the
Caffarelli-Kohn-Nirenberg theorem by a logarithmic factor
 in \cite{[RWW1],[CL],[CY1]}. To make our paper more self-contained and more readable, we outline the  proof of its general case \eqref{inter3}.
\begin{lemma}\label{lemma2.2}
For $0<\mu\leq\f{1}{2}\rho$ and $4\leq b\leq6$ ,~
there is an absolute constant $C$  independent of  $\mu$ and $\rho$,~ such that
\begin{align}
E_{3}(u,\, \mu) &\leq C \B(\f{\rho}{\mu}\B)^{3/2} E^{3/4}(u,\,\rho)E_{\ast}^{3/4}(\nabla  u,\,\rho)
+ C\B(\f{\mu}{\rho}\B)^3 {E^{3/2}(u,\,\rho)},\label{inter1}\\
E_{3}(u,\, \mu) &\leq C \B(\f{\rho}{\mu}\B)^{3/2} E^{1/2}(u,\,\rho) E_{\ast} (\nabla  u,\,\rho)
+ C\B(\f{\mu}{\rho}\B)^3 {E^{3/2}(u,\,\rho)}\label{inter2},\\
E_{3}(u,\,\mu) &\leq  C \left(\dfrac{\rho}{\mu}\right)^{ {3/2}}
E^{\f{b-3}{b-2}}(u,\,\rho)E^{\f{b}{2b-4}}_{\ast}( \nabla u,\,\rho)
    +C\left(\dfrac{\mu}{\rho}\right)
    ^{3}E^{3/2}(u,\,\rho)\label{inter3}.
\end{align}
\end{lemma}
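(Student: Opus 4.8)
The plan is to prove the general inequality \eqref{inter3} directly and obtain \eqref{inter1} and \eqref{inter2} as the endpoint cases $b=6$ and $b=4$: indeed $\frac{b-3}{b-2}=\frac34$ and $\frac{b}{2b-4}=\frac34$ when $b=6$, while $\frac{b-3}{b-2}=\frac12$ and $\frac{b}{2b-4}=1$ when $b=4$. Throughout I set $c:=\overline{u}_{B(\rho)}$ for the spatial average and $w:=u-c$, so that $\nabla w=\nabla u$ and, since the mean minimizes the $L^2$ deviation, $\|w(t)\|_{L^2(B(\rho))}\le\|u(t)\|_{L^2(B(\rho))}$. The elementary bound $|u|^3\le C(|w|^3+|c|^3)$ splits the estimate into a ``constant part'' and a ``gradient part''.

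For the constant part, Cauchy--Schwarz gives $|c|=|\overline{u}_{B(\rho)}|\le C\rho^{-3/2}\|u(t)\|_{L^2(B(\rho))}$, whence $\int_{B(\mu)}|c|^3\,dx=|c|^3|B(\mu)|\le C\mu^3\rho^{-9/2}\|u(t)\|_{L^2(B(\rho))}^3$. Bounding $\|u(t)\|_{L^2(B(\rho))}^3\le(\rho E(u,\rho))^{3/2}$, integrating over the interval $(-\mu^2,0)$ of length $\mu^2$, and multiplying by $\mu^{-2}$ produces exactly the term $C(\mu/\rho)^3E^{3/2}(u,\rho)$; this contribution is independent of $b$.

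The gradient part is the crux, where $b$ enters. Fix $t$. I first interpolate $\|w\|_{L^3(B(\mu))}\le\|w\|_{L^2(B(\mu))}^{\lambda}\|w\|_{L^b(B(\mu))}^{1-\lambda}$ with $\lambda=\frac{2(b-3)}{3(b-2)}$, so that cubing yields the exponents $\frac{2(b-3)}{b-2}$ on $\|w\|_{L^2(B(\mu))}$ and $\frac{b}{b-2}$ on $\|w\|_{L^b(B(\mu))}$. The decisive step is a \emph{two-step} embedding: Hölder on the ball, $\|w\|_{L^b(B(\mu))}\le C\mu^{(6-b)/(2b)}\|w\|_{L^6(B(\mu))}$, which is legitimate precisely because $b\le6$, followed by the scale-invariant Sobolev--Poincaré inequality for the mean-zero $w$, namely $\|w\|_{L^6(B(\rho))}\le C\|\nabla w\|_{L^2(B(\rho))}=C\|\nabla u\|_{L^2(B(\rho))}$. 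Combining these with $\|w\|_{L^2(B(\mu))}\le\|u\|_{L^2(B(\rho))}$ gives the pointwise-in-time estimate
\[
\int_{B(\mu)}|w(t)|^3\,dx\le C\,\mu^{\frac{6-b}{2(b-2)}}\,\|u(t)\|_{L^2(B(\rho))}^{\frac{2(b-3)}{b-2}}\,\|\nabla u(t)\|_{L^2(B(\rho))}^{\frac{b}{b-2}} .
\]
The essential feature is that $\|\nabla u\|$ now carries the $b$-dependent power $\gamma:=\frac{b}{b-2}\in[\tfrac32,2]$; routing the gradient through this Hölder/Sobolev chain, rather than a single Gagliardo--Nirenberg inequality (which would always force the gradient exponent back to $3/2$), is exactly what lets one computation interpolate between \eqref{inter1} and \eqref{inter2}.

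Finally I integrate in time over $(-\mu^2,0)$. I extract $\|u(t)\|_{L^2(B(\rho))}^{2(b-3)/(b-2)}\le(\sup_t\|u\|_{L^2(B(\rho))})^{2(b-3)/(b-2)}=(\rho E(u,\rho))^{(b-3)/(b-2)}$, and I apply Hölder in time to $\int_{-\mu^2}^0\|\nabla u\|^{\gamma}\,dt$ with exponents $(2/\gamma,(2/\gamma)')$; since $\gamma\le2$ this gives $\big(\int_{-\rho^2}^0\|\nabla u\|_{L^2(B(\rho))}^2\,dt\big)^{\gamma/2}\,(\mu^2)^{1-\gamma/2}=(\rho E_*(\nabla u,\rho))^{b/(2b-4)}\,\mu^{(b-4)/(b-2)}$. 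Collecting exponents, the powers of $\mu$ sum to $\tfrac{6-b}{2(b-2)}+\tfrac{b-4}{b-2}=\tfrac12$ and those of $\rho$ sum to $\tfrac{b-3}{b-2}+\tfrac{b}{2(b-2)}=\tfrac32$ for every admissible $b$; after multiplying by $\mu^{-2}$ this is precisely the first term $C(\rho/\mu)^{3/2}E^{(b-3)/(b-2)}(u,\rho)E_*^{b/(2b-4)}(\nabla u,\rho)$ of \eqref{inter3}. I expect the main obstacle to be not any single estimate but the exponent bookkeeping together with the necessity of the mean subtraction: without passing to $w$, the bounded-domain Sobolev inequality would leave a lower-order $\rho^{-1}\|u\|_{L^2}$ term that cannot be absorbed, whereas $\overline{w}_{B(\rho)}=0$ makes the Sobolev--Poincaré constant scale-invariant and $\rho$-clean.
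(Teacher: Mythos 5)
Your proposal is correct and follows essentially the same route as the paper: split off the spatial mean $\overline{u}_{B(\rho)}$, interpolate the $L^3$ norm of $u-\overline{u}_{B(\rho)}$ between $L^2$ and $L^b$, pass from $L^b(B(\mu))$ to $L^6$ by H\"older (using $b\le 6$) and then to $\|\nabla u\|_{L^2(B(\rho))}$ by the scale-invariant Sobolev--Poincar\'e inequality, and finish with H\"older in time (using $b\ge 4$); the exponent bookkeeping matches the paper's. The only presentational difference is that you state explicitly that \eqref{inter1} and \eqref{inter2} are the endpoint cases $b=6$ and $b=4$ of \eqref{inter3}, which the paper leaves implicit.
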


\begin{proof}[Proof of  inequality \eqref{inter3}]
Taking advantage  of the triangle inequality, H\"older's inequality and the Poincar\'e-Sobolev  inequality, for any $3<b\leq6$, we know that
\begin{align}\nonumber
\int_{B(\mu)}|u|^{3}dx\leq& C\int_{B(\mu)}|u-\bar{u}_{{\rho}}|^{3}dx
+C\int_{B(\mu)}|\bar{u}_{{\rho}}|^{3} dx\\
\leq& C\B(\int_{B(\mu)}|u-\bar{u}_{{\rho}}|^{2}dx\B)
^{\f{b-3}{b-2}}
\B(\int_{B(\mu)}|u-\bar{u}_{{\rho}}|
^{b}dx\B)^{\f{1}{b-2}}+
\f{\mu^{3} C}{\rho^{\f{9}{2}}}\B( \int_{B(\rho)}|u|^{2}dx\B)^{3/2}\nonumber
\\
\leq& C\mu^{\f{6-b}{2b-4}}\B(\int_{B(\rho)}|u|^{2}dx\B)
^{\f{b-3}{b-2}}
\B(\int_{B(\mu)}|u-\bar{u}_{{\rho}}|
^{6}dx\B)^{\f{b}{6b-12}}+
  \f{\mu^{3}C}{\rho^{\f{9}{2}}}\B( \int_{B(\rho)}|u|^{2}dx\B)^{3/2}\nonumber
\\
\leq& C\mu^{\f{6-b}{2b-4}}\B(\int_{B(\rho)}|u|^{2}dx\B)
^{\f{b-3}{b-2}}
 \B(\int_{B(\rho)}|\nabla u|^{2}dx\B)^{\f{b}{2b-4}} + \f{\mu^{3}C}{\rho^{\f{9}{2}}}\B( \int_{B(\rho)}|u|^{2}dx\B)^{3/2}.\nonumber\label{lem2.31}
 \end{align}
Integrating in time on $(-\mu^{2},\,0)$ this inequality and using the H\"older inequality, for any $b\geq4$, we obtain
 \begin{align}
\iint_{Q(\mu)}|u|^{3}dxdt
\leq& C\mu^{\f{1}{2} }\B(\sup_{-\rho^{2}\leq t\leq0}\int_{B(\rho)}|u |^{2}dx\B)
^{\f{b-3}{b-2}}
 \B(\iint_{Q(\rho)}|\nabla u|^{2}dxdt\B)^{\f{b}{2b-4}}\nonumber\\&+
 C\f{\mu^{5}}{\rho^{\f{9}{2}}}\B(\sup_{-\rho^{2 }\leq t\leq0}\int_{B(\rho)}|u|^{2}dx\B)^{3/2},\nonumber
 \end{align}
which leads to   the desired inequality
$$
E_{3}(u,\,\mu) \leq  C \left(\dfrac{\rho}{\mu}\right)^{ {3/2}}
E^{\f{b-3}{b-2}}(u,\,\rho)E^{\f{b}{2b-4}}_{\ast}( \nabla u,\,\rho)
    +C\left(\dfrac{\mu}{\rho}\right)
    ^{3}E^{3/2}(u,\,\rho).
$$
\end{proof}

  One can  also establish the following   decay estimate of pressure
via the interior estimate
of harmonic function; see also \cite{[TX],[WZ],[CL],[LS],[Lin],[CKN],[WW]} for different versions. For its proof,    we refer the reader to \cite[Lemma 2.1, p.222]{[Seregin]}.
Thanks to
the pressure  $\Pi$ in terms of
$\nabla \Pi$  in equations,
 we can  invoke this lemma in the proof of Theorem \ref{the1.2}.
\begin{lemma}\label{presure}
For \wblue{$0<\mu\leq\f{1}{2}\rho$}, there is an absolute constant $C$ independent of $\mu$ and $\rho$ such that
\wred{\begin{equation}\label{beterrp}
\begin{split}
P_{3/2}(\Pi ,\mu)
&\leq C\left(\f{\rho}{\mu}\right)^{2}E_{3}(u, \rho)+
C\left(\f{\mu}{\rho}\right)^{5/2}P_{3/2}(\Pi, \rho).
\end{split}
\end{equation}}
\end{lemma}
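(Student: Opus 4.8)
The plan is to adapt the classical Caffarelli--Kohn--Nirenberg--Seregin pressure splitting, carried out on each time slice and then integrated over $(-\mu^{2},0)\subset(-\rho^{2},0)$. Fix a cut-off $\zeta\in C_{0}^{\infty}(B(\rho))$ with $\zeta\equiv1$ on $B(\rho/2)$, and set $\overline{u}:=\overline{u}_{B(\rho)}$. Because $\Div u=0$, one has the pointwise identity $\partial_{i}\partial_{j}(u_{i}u_{j})=\partial_{i}\partial_{j}\big[(u_{i}-\overline{u}_{i})(u_{j}-\overline{u}_{j})\big]$, so I would define $\Pi_{1}(\cdot,t)$ as the Newtonian potential solving $-\Delta\Pi_{1}=\partial_{i}\partial_{j}\big[\zeta(u_{i}-\overline{u}_{i})(u_{j}-\overline{u}_{j})\big]$ on $\R^{3}$, and put $\Pi_{2}:=\Pi-\Pi_{1}$. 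Since $-\Delta\Pi=\partial_{i}\partial_{j}(u_{i}u_{j})$ and the source of $\Pi_{1}$ coincides with it on $B(\rho/2)$ (where $\zeta\equiv1$), the function $\Pi_{2}$ is harmonic in $B(\rho/2)$.

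For the local part I would invoke the Calder\'on--Zygmund bound for $\partial_{i}\partial_{j}(-\Delta)^{-1}$ on $L^{3/2}(\R^{3})$, giving, at each time, $\|\Pi_{1}\|_{L^{3/2}(\R^{3})}\le C\big\||u-\overline{u}|^{2}\big\|_{L^{3/2}(B(\rho))}=C\|u-\overline{u}_{B(\rho)}\|^{2}_{L^{3}(B(\rho))}$. Subtracting the mean over $B(\mu)$ costs only a constant, and $\|u-\overline{u}_{B(\rho)}\|_{L^{3}(B(\rho))}\le C\|u\|_{L^{3}(B(\rho))}$ by H\"older; raising to the power $3/2$, integrating over $(-\mu^{2},0)$ and enlarging the time interval to $(-\rho^{2},0)$ yields $\int_{-\mu^{2}}^{0}\|\Pi_{1}-\overline{(\Pi_{1})}_{B(\mu)}\|^{3/2}_{L^{3/2}(B(\mu))}\,dt\le C\|u\|^{3}_{L^{3}(Q(\rho))}=C\rho^{2}E_{3}(u,\rho)$. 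Multiplying by $\mu^{-2}$ produces the first term $C(\rho/\mu)^{2}E_{3}(u,\rho)$.

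For the harmonic part, assume $\mu\le\rho/4$ (the remaining range $\rho/4<\mu\le\rho/2$ is trivial, since then both factors $\rho/\mu$ and $\mu/\rho$ are comparable to $1$). The interior gradient estimate for the harmonic function $\Pi_{2}$ on $B(\rho/2)$ gives, for $x\in B(\mu)$, $|\nabla\Pi_{2}(x)|\le C\rho^{-1}\fint_{B(\rho/2)}|\Pi_{2}-\overline{(\Pi_{2})}_{B(\rho/2)}|$, hence by the mean value theorem and H\"older $|\Pi_{2}(x)-\overline{(\Pi_{2})}_{B(\mu)}|\le C\mu\rho^{-3}\|\Pi_{2}-\overline{(\Pi_{2})}_{B(\rho/2)}\|_{L^{3/2}(B(\rho/2))}$. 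Taking the $L^{3/2}(B(\mu))$ norm (a factor $\mu^{2}$), raising to the power $3/2$, integrating in time and multiplying by $\mu^{-2}$ leads to the bound $C\mu^{5/2}\rho^{-9/2}\int_{-\rho^{2}}^{0}\|\Pi_{2}-\overline{(\Pi_{2})}_{B(\rho/2)}\|^{3/2}_{L^{3/2}(B(\rho/2))}\,dt$. Writing $\Pi_{2}=\Pi-\Pi_{1}$ and using both $\|\Pi-\overline{\Pi}_{B(\rho/2)}\|_{L^{3/2}(B(\rho/2))}\le C\|\Pi-\overline{\Pi}_{B(\rho)}\|_{L^{3/2}(B(\rho))}$ and the $\Pi_{1}$ estimate of the previous step, this is controlled by $C(\mu/\rho)^{5/2}P_{3/2}(\Pi,\rho)+C(\mu/\rho)^{5/2}E_{3}(u,\rho)$, and since $\mu\le\rho$ the last term is absorbed into $C(\rho/\mu)^{2}E_{3}(u,\rho)$. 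Combining the two parts gives the assertion.

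The part that must be handled with care is the decomposition itself: verifying that $\Pi_{2}$ is genuinely harmonic on $B(\rho/2)$ --- which is exactly where the incompressibility identity is used --- so that the (nonlocal) dependence of $\Pi$ on the values of $u$ outside $B(\rho)$ is carried entirely by the harmonic part and never needs to be estimated directly. Everything else is Calder\'on--Zygmund theory, interior estimates for harmonic functions, and careful tracking of the scaling exponents; in particular the exponent $5/2$ in the second term is forced by the $L^{3/2}$ scaling built into the harmonic estimate.
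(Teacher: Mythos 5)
Your argument is correct and is essentially the standard proof that the paper itself omits (it only cites Seregin, Lemma~2.1): decompose $\Pi$ on each time slice into a Calder\'on--Zygmund part $\Pi_{1}$ generated by the localized quadratic source $\partial_i\partial_j[\zeta(u_i-\overline{u}_i)(u_j-\overline{u}_j)]$ and a remainder $\Pi_{2}$ that is harmonic on $B(\rho/2)$, then use the $L^{3/2}$ boundedness of $R_iR_j$ for the first piece and interior derivative estimates for the second; your bookkeeping of the exponents (in particular the $\mu^{9/2}\rho^{-9/2}$ from the harmonic part becoming $(\mu/\rho)^{5/2}$ after dividing by $\mu^{2}$) is accurate. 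No gaps.
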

Note that
if the pair $(u,\,\Pi)$ is a suitable weak solution, so is $(u,\,\Pi-\overline{\Pi})$, therefore, the following $\varepsilon$-regularity criterion proved in \cite{[LS],[Lin]} is valid.
\begin{prop}\label{Lin}
Let $(u,\,\Pi)$ be a suitable weak solution to (\ref{NS}) in $Q(1) $. There exists
$\varepsilon_0>0$ such that if
\be
\f{1}{r^{2}}\iint_{Q(r)}|u|^3+|\Pi-\wblue{\overline{\Pi}_{r}}|^{3/2}dxdt\leq \varepsilon_0,
\ee
then $u$ is regular in $Q(r/2)$.
\end{prop}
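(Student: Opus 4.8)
The plan is to prove this scale-invariant $\varepsilon$-regularity statement by the blow-up (compactness) method of Lin, reducing regularity to a single geometric decay lemma for the natural excess. After translating $(x_{0},t_{0})$ to the origin, and using that $(u,\Pi)$ and $(u,\Pi-\overline{\Pi}_{r})$ are simultaneously suitable weak solutions, I would introduce the scaling-invariant excess
\be\label{exc}
\Phi(r):=E_{3}(u,r)+P_{3/2}(\Pi,r)=\f{1}{r^{2}}\iint_{Q(r)}|u|^{3}+\big|\Pi-\overline{\Pi}_{r}\big|^{3/2}\,dxdt.
\ee
The hypothesis is $\Phi(r)\le\varepsilon_{0}$, and the goal is to show that this forces $\Phi(\rho)\to0$, in fact $\Phi(\rho)\lesssim\rho^{\alpha}$ for some $\alpha>0$, as $\rho\to0$; this Morrey-type decay then upgrades to local boundedness of $u$ on $Q(r/2)$.

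The core is a decay lemma: there are $\theta\in(0,1/2)$ and $\varepsilon_{0}>0$ so that $\Phi(r)\le\varepsilon_{0}$ implies $\Phi(\theta r)\le\tfrac12\Phi(r)$. I would argue by contradiction. If it fails, there is a sequence of suitable weak solutions whose excess, rescaled to the unit cylinder, satisfies $\Phi_{k}(1)=:\varepsilon_{k}\to0$ while $\Phi_{k}(\theta)>\tfrac12\varepsilon_{k}$. Performing the amplitude normalization $v_{k}=\varepsilon_{k}^{-1/3}u_{k}$ together with the correspondingly rescaled mean-free pressure $q_{k}$, so that $v_{k}$ is of unit size in $L^{3}(Q(1))$, the pair solves
\be\label{bl}
\partial_{t}v_{k}-\Delta v_{k}+\varepsilon_{k}^{1/3}\,v_{k}\cdot\nabla v_{k}+\nabla q_{k}=0,\qquad \Div v_{k}=0.
\ee

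The local energy inequality \eqref{loc}, applied with a cutoff on $Q(1/2)\subset Q(1)$ and with $\Pi$ replaced by its mean-free part, furnishes uniform bounds for $E(v_{k},\tfrac12)+E_{\ast}(\nabla v_{k},\tfrac12)$, so that $v_{k}$ is bounded in $L^{\infty}_{t}L^{2}_{x}\cap L^{2}_{t}\dot H^{1}_{x}$ on $Q(1/2)$; using \eqref{bl} to control $\partial_{t}v_{k}$ in a negative-order space, the Aubin--Lions lemma yields a subsequence with $v_{k}\to v$ strongly in $L^{3}(Q(1/2))$. Since $\varepsilon_{k}^{1/3}\to0$, the nonlinearity is negligible in the limit and $(v,q)$ solves the linear Stokes system $\partial_{t}v-\Delta v+\nabla q=0$, $\Div v=0$. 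For this system $v$ is smooth in the interior and $q$ is harmonic in $x$ (take the divergence of the equation), so interior estimates give the sharp decay $E_{3}(v,\theta)\lesssim\theta^{3}\|v\|_{L^{3}(Q(1/2))}^{3}$ and, through the harmonic excess bound underlying Lemma~\ref{presure}, a comparable decay of $P_{3/2}(q,\theta)$; hence the limiting excess $\Phi_{v}(\theta):=E_{3}(v,\theta)+P_{3/2}(q,\theta)\le C\theta^{3}<\tfrac12$ once $\theta$ is fixed small. On the other hand, the strong convergence passes the non-decay hypothesis to the limit, forcing $\Phi_{v}(\theta)\ge\tfrac12$, a contradiction. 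The decay lemma therefore holds.

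Iterating it gives $\Phi(\theta^{k}r)\le2^{-k}\Phi(r)$, and tracking the exponent upgrades this to $\Phi(\rho)\lesssim\rho^{\alpha}$. Feeding this Morrey bound back into the equation for $u$—viewed as a heat equation with right-hand side $\Div(u\otimes u)+\nabla\Pi$, whose data now lie in good parabolic spaces at small scales—bootstraps to $u\in L^{\infty}$ near the origin, i.e. regularity; running the argument at every center in a neighborhood of $(x_{0},t_{0})$ then gives regularity on $Q(r/2)$. The main obstacle is the compactness/limit step, and within it the pressure: the velocity bounds alone do not give pressure compactness, so one must exploit the nonlocal representation $\Pi=-\Delta^{-1}\partial_{i}\partial_{j}(u_{i}u_{j})$ to split the pressure into a local Calder\'on--Zygmund part controlled quadratically by $\|u\|_{L^{3}}$ and a harmonic remainder with its own good decay. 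This is what ensures that the limiting pressure is genuinely slaved to $v$, that the normalized excess does not degenerate, and that the non-decay hypothesis survives passage to the limit.
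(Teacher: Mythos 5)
The paper does not actually prove Proposition \ref{Lin}: it only observes that $(u,\Pi-\overline{\Pi})$ is again a suitable weak solution and then cites Lin and Ladyzhenskaya--Seregin, whose arguments are precisely the blow-up/compactness scheme you outline. So your overall strategy is the intended one, and most of the sketch (local energy bounds for the normalized sequence, Aubin--Lions, the limiting linear system, Morrey decay, bootstrap) is standard and sound.

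There is, however, one genuine gap, at the step ``the strong convergence passes the non-decay hypothesis to the limit, forcing $\Phi_v(\theta)\ge\tfrac12$.'' The excess $\Phi=E_{3}(u,\cdot)+P_{3/2}(\Pi,\cdot)$ is not homogeneous under your amplitude normalization: to keep the coefficient $1$ in front of $\nabla q_k$ in \eqref{bl} with $v_k=\varepsilon_k^{-1/3}u_k$ you must take $q_k=\varepsilon_k^{-1/3}\Pi_k$, and then $\varepsilon_k^{-1}P_{3/2}(\Pi_k,\theta)=\varepsilon_k^{-1/2}P_{3/2}(q_k,\theta)$, which need not converge to $P_{3/2}(q,\theta)$ (indeed $P_{3/2}(q_k,1)\le\varepsilon_k^{1/2}\to0$, so the limit system is really the heat equation and the limiting pressure carries no information). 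Because the velocity enters the excess cubically and the pressure with power $3/2$, while $\Pi$ is quadratic in $u$, no single scalar normalization makes both terms of $\Phi_k(\theta)/\varepsilon_k$ pass to the limit, and the contradiction does not close as written. The standard repair --- which is what Lin and Ladyzhenskaya--Seregin actually do, and what you only gesture at in your closing paragraph --- is to run the compactness argument for the velocity part alone, obtaining $E_{3}(u,\theta r)\le C\theta^{\kappa}\big(E_{3}(u,r)+P_{3/2}(\Pi,r)\big)$ for some $\kappa>0$, and to control the pressure at the smaller scale not through the blow-up limit but through the Calder\'on--Zygmund/harmonic splitting, i.e.\ exactly the paper's Lemma \ref{presure}: $P_{3/2}(\Pi,\theta r)\le C\theta^{-2}E_{3}(u,r)+C\theta^{5/2}P_{3/2}(\Pi,r)$. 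Summing the two inequalities and fixing $\theta$ small then yields the decay of the full excess, after which your iteration and bootstrap go through. In short, the pressure decomposition must replace, not merely supplement, the passage of the pressure term to the limit in the contradiction argument.
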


\section{Proof  of Theorem \ref{the1.2}}
\label{sec3}
\setcounter{section}{3}\setcounter{equation}{0}
In this section, following the pathway of \cite{[KY6]} together with the auxiliary lemmas  in \wblue{Section} \ref{sec2}, we present the proof of Theorem 1.2.

\begin{proof}[Proof of Theorem \ref{the1.2}]
Without loss of generality, we 	
assume that $(x_{0},\,t_{0})=(0,\,0)$.
As \cite {[KY6]}, we present the assumption below
\be\label{assume}
 \iint_{Q (2\rho)}
|\nabla u |^{2} +| u |^{ 10/3}+|\Pi-\overline{\Pi}_{2\rho} |^{ 5/3}+
 |\nabla \Pi| ^{5/4}dxdt \leq    (2\rho)^{ 5/3-\gamma}\varepsilon_{1},
\ee
hence, it suffices to prove that $\gamma<115/312$ and $\gamma$ will be  sufficiently  close to $115/312$.\\
First, we assert that  $E(u,\rho)\leq C\varepsilon_{1}^{3/5}\rho^{-\f{3\gamma}{5}}$.
 Indeed,
letting $\phi(x,t)$ be  a smooth positive function supported in $Q(2\rho)$ and with value $1$ on the ball $Q(\rho)$, then,
 employing the divergence free condition,
H\"older's inequality thrice and the Gagliardo-Nirenberg inequality, we derive that
$$\ba
\iint_{\wblue{Q(2\rho)}}(|u|^{2}- \overline{|u|^{2}}_{2\rho} )u\nabla \phi dxdt
&\leq C\rho^{-1}\Big\| |u|^{2}- \overline{|u|^{2}}_{2\rho}  \Big\|_{L^{10/7,15/8}(Q(2\rho))}
 \| u \| _{L^{10/3,15/7}(Q(2\rho))}\\
&\leq C\rho^{-1}\| u ^{2}\|_{L^{5/3}(Q(2\rho))}^{1/2}
  \|u\nabla u  \|^{1/2}_{L^{5/4}(Q(2\rho))}
  \| u \| _{L^{10/3,15/7}(Q(2\rho))}\\
&\leq C\rho^{-1/2}\| u  \|^{5/2}_{L^{10/3}(Q(2\rho))}
     \|\nabla u  \|^{1/2}_{L^{2}(Q(2\rho))},
  \ea  $$
and
$$\ba
\iint_{\wblue{Q(2\rho)}}(&\Pi-\overline{\Pi}_{2\rho})u\nabla \phi dxdt\\
&\leq C\rho^{-1}\Big\|\Pi-\overline{\Pi}_{2\rho} \Big\|_{L^{10/7,15/8}(Q(2\rho))}
 \| u \| _{L^{10/3,15/7}(Q(2\rho))}\\
&\leq C\rho^{-1}\|\Pi-\overline{\Pi}_{2\rho}
\|_{L^{5/3}(Q(2\rho))}^{1/2}
  \|\Pi-\overline{\Pi}\|^{1/2}_{L^{5/4,15/7}(Q(2\rho))}
  \| u \| _{L^{10/3,15/7}(Q(2\rho))}\\
&\leq C\rho^{-1/2}\|\Pi-\overline{\Pi}_{2\rho}
\|_{L^{5/3}}^{1/2}
  \|\nabla\Pi\|^{1/2}_{L^{5/4}(Q(2\rho))}
  \| u \| _{L^{10/3}(Q(2\rho))}.
  \ea  $$

  These inequalities combined with the local energy inequality \eqref{loc} leads to
\begin{align}
\sup_{-\rho^{2}\leq t<0}\int_{B(\rho )}|u|^{2}&dx
+2C\iint_{Q (\rho)}
|\nabla u|^{2}dxdt
\nonumber\\
\leq& C
\Big(\iint_{Q(2\rho)}|u|^{10/3}dxdt\Big)^{3/5}
 +  C\rho^{-1/2} \| u  \|^{5/2}_{L^{10/3}(Q(2\rho))}
     \|\nabla u  \|^{1/2}_{L^{2}(Q(2\rho)) }\nonumber\\&+ C\rho^{-1/2}\| u  \|_{{L^{10/3}}(Q(2\rho))}
 \|\Pi-\overline{\Pi}_{2\rho}
\|_{L^{5/3}(Q(2\rho))}^{1/2}
  \|\nabla\Pi\|^{1/2}_{L^{5/4}(Q(2\rho))}
      \nonumber\\
\leq& C\varepsilon_{1}^{3/5}\rho^{1-\f{3\gamma}{5}},
\label{loc1.2} \end{align}
where  we have used \eqref{assume} and assumed that $\gamma\leq5/12.$ As a consequence,
\be\label{E}
E(u,\rho)\leq C\varepsilon_{1}^{3/5}\rho^{-\f{3\gamma}{5}}.
\ee

Second, iterating \eqref{beterrp} in Lemma \ref{presure}, we infer  that
\be\label{referee}
P_{3/2}(\Pi,\theta^{N}\mu)\leq C\sum^{N}_{\wblue{k=1}}\theta^{-2+\f{5(k-1)}{2}}
E_{3}(u,\theta^{N-k}\mu)+C\theta^{5N/2}
P_{3/2}(\Pi, \mu).
\ee
In view of  the Poincar\'e-Sobolev  inequality and H\"older's inequality, we see that
\begin{align}
\nonumber
 \|\Pi-\overline{\Pi}_{ \mu}\|^{3/2}_{L^{3/2}(Q( \mu))} &\leq
 \|\Pi-\overline{\Pi}_{ \mu}\|^{1/2}_{L^{5/4,15/7}(Q( \mu))} \|\Pi-\overline{\Pi}_{ \mu}\|_{L^{5/3,30/23}(Q( \mu))}\nonumber\\
 &\leq
 C\rho^{1/2}\|\nabla \Pi \|^{1/2}_{L^{5/4}(Q( \mu))} \|\Pi-\overline{\Pi}_{ \mu}\|_{L^{5/3}(Q( \mu))}.
 \label{pressureinti}\end{align}
Dividing both sides of \eqref{pressureinti} by $\mu^{2}$, we have
$$
P_{3/2}(\Pi, \mu)\leq CP^{2/5}_{5/4}(\nabla\Pi,\,\mu)
P^{3/5}_{5/3}(\Pi,\,\mu).
$$
By inserting the above inequality into \eqref{referee}, we know that
\be\label{refer}
P_{3/2}(\Pi,\theta^{N}\mu)\leq C\sum^{N}_{\wblue{k=1}}\theta^{-2+\f{5(k-1)}{2}}
E_{3}(u,\theta^{N-k}\mu)+C\theta^{5N/2}
P^{2/5}_{5/4}(\nabla\Pi,\,\mu)
P^{3/5}_{5/3}(\Pi,\,\mu).
\ee

To proceed further, we set $r=\rho^{\alpha}=\theta^{N}\mu$,~$ \theta=\rho^{\beta}$, ~$r_{N}=\mu =\theta^{-N}r=\rho^{\alpha-N\beta}$, where $\alpha,\beta$ is determined by $\gamma$.
 Their precise selection will be given
   in the end.
 Hence, we derive from \eqref{refer} that
$$\ba
P_{3/2}(\Pi,r)+E_{3}(u,r)
&\leq C\sum^{N}_{\wblue{k=1}}\theta^{-2+\f{5(k-1)}{2}}
E_{3}(u,\theta^{-k}r)+C\theta^{5N/2}P^{2/5}_{5/4}
(\nabla\Pi,\,r_{N})
P^{3/5}_{5/3}(\Pi,\,r_{N})
\\
&=
C\sum^{N}_{\wblue{k=1}}\theta^{-2+\f{5(k-1)}{2}}
E_{3}(u,r_{k})+C\theta^{5N/2}P^{2/5}_{5/4}
(\nabla\Pi,\,r_{N})
P^{3/5}_{5/3}(\Pi,\,r_{N}),
\ea$$
where we have used the fact that $
E_{3}(u,r)\leq C\theta^{-2}E_{3}(u,\theta^{-1}r).
$ Our aim below is to resort Proposition \ref{Lin} to complete the proof.
To this end, we suppose that \wblue{$r_{N}\leq \rho$}, then, we can adopt \eqref{inter2} in Lemma \ref{lemma2.2}, \eqref{E} and the hypothesis \eqref{assume} to obtain
$$\ba
E_{3}(u,\, r_{k})&\leq C \Big(\f{\rho}{ r_{k}}\Big)^{\wred{\f{3}{2}}}E^{1/2}(u,\,\rho)
 E_{\ast} ( \nabla u,\,\rho)+\Big(\f{ r_{k}}{\rho}\Big)^{3}E^{3/2}(u,\,\rho)\\
  &\leq C\varepsilon_{1}^{9/10} \Big( \rho^{\f{13}{6}-\f{3}{2}(\alpha-\wred{k}\beta)-\f{13\gamma}{10}}
  +\rho^{3\alpha-3-3\wred{k}\beta-\f{9\gamma}{10}} \Big).
 \ea$$
Therefore, for sufficiently large $N$, some elementary  calculations yield
$$\ba
\sum^{N}_{k=1}\theta^{-2+\f{5(k-1)}{2}}
E_{3}(u,r_{k})&\leq C\varepsilon_{1}^{9/10}\sum^{N}_{k=1}\B(\rho^{4 k\beta-\f{9\beta}{2}+\f{13}{6}-\f{3\alpha}{2} -\f{13\gamma}{10}}
  +\rho^{ -\f{ k\beta}{2}-\f{9\beta}{2}+    3\alpha-3 -\f{9\gamma}{10}}\B) \\
    &\leq C\varepsilon_{1}^{9/10}\B(\rho^{ -\f{\beta}{2}+\f{13}{6}-\f{3\alpha}{2} -\f{13\gamma}{10}}
  +\rho^{-\f{N\beta}{2}-\f{9\beta}{2}+ \wblue{3\alpha}-3-\f{9\gamma}{10}}\B)
  \\
    &\leq C\varepsilon_{1}^{9/10} \rho^{-\f{11\beta}{6}+\f{4}{9}
    -\f{7\gamma}{6}-\f{N\beta}{6}},
  \ea $$
where $\alpha$ is determined from
$$-\f{\beta}{2}+\f{13}{6}-\f{3\alpha}{2} -\f{13\gamma}{10}=
-\f{N\beta}{2}-\f{9\beta}{2}+    3\alpha-3-\f{9\gamma}{10}.$$
In short,
\be\label{aerfa}
\alpha=\f{2}{9}(4\beta+\f{31}{6}-\f{2\gamma}{5}+\f{1}{2}N\beta).
\ee

According to $r_{N}\leq \rho$    assumed above, namely,
 \be\label{c3}
\rho^{\alpha-N\beta}\leq \rho,\ee
\eqref{assume} and \eqref{aerfa}, we get
$$\ba
 \theta^{5N/2}P^{2/5}_{5/4}
(\nabla\Pi&,\,r_{N})
P^{3/5}_{5/3}(\Pi,\,r_{N})\\ \leq&  \rho^{\f{5N\beta}{2}} r_{N}^{-\f{3}{2}}
\B(\iint_{Q(2\rho)}|\nabla\Pi|^{5/4}dxdt\B)^{2/5}
\B( \iint_{Q(r_{N})}|\Pi-\overline{\Pi}_{2\rho}|^{5/3}dxdt\B)^{3/5}\\
\leq&  \rho^{\f{5N\beta}{2}}r_{N}^{-\f{3}{2}}
\B(\iint_{Q(2\rho)}|\nabla\Pi|^{5/4}dxdt\B)^{2/5}
\B(\iint_{Q(2\rho)}|\Pi-\overline{\Pi}_{2\rho}|^{5/3}dxdt\B)^{3/5}\\
\leq&  C\rho^{\f{23}{6} N\beta -\f{4\beta}{3}-\f{9\gamma}{10}}\varepsilon_{1}.
 \ea$$
 To conclude, by  Proposition \ref{Lin},
  we need  $-\f{11\beta}{6}+\f{4}{9}
    -\f{7\gamma}{6}-\f{N\beta}{6}\wblue{\geq 0}$ and $\f{23}{6} N\beta -\f{4\beta}{3}-\f{9\gamma}{10}\wblue{\geq 0}$, moreover, from \eqref{c3}, we know that  $\alpha-N\beta-1\geq0.$\\
In summary, the 	
index $\gamma$ should satisfy
$$
\gamma\leq \min\Big\{
\f{115N\beta-40\beta}{27},
\f{8-3N\beta-33\beta}{21},
\f{5-30N\beta+30\beta}{3},\f{5}{12}
\Big\},
$$
which means that $N\beta=9/104$ is appropriate.

Eventually, for any fixed  $\gamma<115/312$, we choose $N$
sufficiently large such that
$$
\beta=\f{9}{104N}\leq\f{7}{11}\B(\f{115}{312}-\gamma\B).
$$
Then, we select a fixed $\rho<1$ such that $\rho^{\beta}<1/2$.
With $\gamma,N,\beta$ in hand, we pick $\alpha=
\f{2}{9}(4\beta-\f{2\gamma}{5}+\f{3251}{624}).
$
This  allows us to get
$$
P_{3/2}(\Pi,r)+E_{3}(u,r)
 \leq C\varepsilon_{1}^{9/10}\leq \varepsilon_0,$$
with $r=\rho^{\alpha}$.
This completes the proof of Theorem \ref{the1.2}.
\end{proof}


\section*{Acknowledgement}
The authors would like to express their deepest gratitude to    two anonymous referees  and   the editors
   for  careful reading of our manuscript, the invaluable comments and suggestions which helped to improve the paper greatly,   especially, enlightening comments on  inequality \eqref{refer}.
   In addition,
  we would also like to express our thanks to  Prof. Minsuk Yang for pointing out that the proof of \eqref{inter1} in Lemma \ref{lemma2.2} can be found in \cite{[CY1]}.
Wang was partially supported by NSFC (No. 11601492).
Wu was partially supported by NSFC (No.11101405).


\begin{thebibliography}{00}


\bibitem{[CKN]}
L. Caffarelli,  R. Kohn and L. Nirenberg,  Partial regularity
of suitable weak solutions of Navier-Stokes equation, {\it Comm. Pure. Appl. Math., }   \textbf{35} (1982), 771--831.


\bibitem{[CL]}
H.  Choe, and J.  Lewis, On the singular set in the Navier-Stokes equations, {\it J. Funct. Anal.} \textbf{175}   (2000) 348-369.


\bibitem{[CY1]}
H.  Choe and M. Yang, Hausdorff measure of the singular set in the incompressible magnetohydrodynamic equations, {\it Comm. Math. Phys.} \textbf{336}   (2015) 171--198.


\bibitem{[CV]}
K. Choi and
A. Vasseur,  Estimates on fractional higher derivatives of
weak solutions for the Navier-Stokes equations.
{\it  Ann. Inst. H. Poincar\'e Anal. Non Lin\'eaire, } \textbf{31} (2014), 899--945.


 \bibitem{[Constantin]}
P. Constantin, Navier-Stokes equations and area of interfaces, {\it Comm. Math.
Phys.,} \textbf{129} (1990), 241--266.


\bibitem{[Falconer]}
K. Falconer, Fractal Geometry: Mathematical
Foundations and Applications (New York: Wiley) 1990.


 \bibitem{[FGT]}
 C. Foias, C. Guillop\'e and R. Temam, New a priori estimates for Navier-Stokes
equations in Dimension 3, {\it Comm. Partial Diff. Equat.,} \textbf{6} (1981),  329--359.

\bibitem{[GS]}
 Y. Giga and H. Sohr, Abstract $L^p$-estimates for the Cauchy problem with applications to the
Navier-Stokes equations in exterior domains,  {\it J. Funct. Anal., } \textbf{102}  (1991),  72--94.



\bibitem{[KY6]}
Y. Koh and M. Yang,
The Minkowski dimension of interior singular points in
the incompressible Navier-Stokes equations. {\it  J. Differential Equations.,}    \textbf{261} (2016),  3137--3148.

\bibitem{[Kukavica]}
I. Kukavica, The fractal dimension of the singular set for solutions of the Navier-Stokes system
{\it Nonlinearity.,} \textbf{22} (2009),  2889--2900.
\bibitem{[KP]}I.
Kukavica and Y. Pei, An estimate on the parabolic fractal dimension of the singular set for solutions of the Navier-Stokes system. {\it Nonlinearity.,} \textbf{25} (2012),  2775--2783.




































\bibitem{[LS]}O.  Ladyzenskaja and G.  Seregin,  On
partial regularity of suitable weak solutions to the three-dimensional Navier-Stokes equations, {\it   J. Math.
Fluid Mech.,}  \textbf{1}   (1999),  356--387.

\bibitem{[Leray]}  J. Leray, Sur le mouvement d\'eun liquide visqueux
emplissant l\'espace, {\it Acta Math.,} \textbf{63} (1934),  193--248.

\bibitem{[Lin]}F. Lin,   A new proof of the Caffarelli-Kohn-Nirenberg
Theorem,    {\it Comm. Pure Appl. Math.,}   \textbf{51} (1998),  241--257.




\bibitem{[RWW]} W. Ren, Y. Wang and G. Wu,
Partial regularity of suitable weak solutions to the multi-dimensional generalized magnetohydrodynamics equations. {\it Commun. Contemp. Math.,} (2016). Doi: 10.1142/S0219199716500188.




\bibitem{[RWW1]}
\bysame,
General logarithmic improvement on the  Caffarelli-Kohn-Nirenberg theorem, submitted for publication. 2016.



\bibitem{[RS1]}
J. Robinson  and W. Sadowski, Decay of weak solutions and the singular set of the three-dimensional
Navier-Stokes equations, {\it Nonlinearity.,} \textbf{20} (2007), 1185--1191.


\bibitem{[RS2]}
\bysame, Almost-everywhere uniqueness of Lagrangian trajectories for suitable
weak solutions of the three-dimensional Navier-Stokes equations, {\it Nonlinearity.,} \textbf{22}, (2009) 2093--2099.


\bibitem{[RS3]}
\bysame,
On the Dimension of the Singular Set of Solutions to the Navier-Stokes Equations, {\it Comm. Math.
Phys.,}
 \textbf{ 309}  (2012),  497--506.








\bibitem{[Scheffer]}
V. Scheffer,  Turbulence and Hausdorff dimension,
in Turbulence and the Navier-Stokes Equations,
 Lecture Notes in Math.,  Springer-Verlag,  \textbf{565} (1976),    94--112.
\bibitem{[Scheffer1]}
\bysame,   Partial regularity of solutions to the Navier-Stokes
equations,     {\it  Pacific J. Math.,}  \textbf{66} (1976),  535--552.

\bibitem{[Scheffer2]}\bysame,  Hausdorff measure and the Navier-Stokes equations,   {\it  Comm. Math. Phys.,} \textbf{55} (1977),  97--112.

\bibitem{[Scheffer3]}
\bysame,  The Navier-Stokes equations in space dimension
four,     {\it  Comm. Math. Phys.,}  \textbf{61} (1978),  41--68.


\bibitem{[Seregin]}
G. Seregin, On smoothness of $L_{3,\infty}$-solutions to the Navier-Stokes equations up to boundary. {\it Math. Ann., } \textbf{332} (2005),   219--238.



\bibitem{[Solonnikov]}
V. A. Solonnikov. Estimates for solutions of nonstationary Navier-Stokes equations. {\it J. Soviet Math.,} \textbf{8} (1977), 467--523.



\bibitem{[TY]}
 L. Tang and Y. Yu, Partial regularity of suitable weak solutions to the fractional Navier-Stokes equations. {\it Comm. Math. Phys., } \textbf{334} (2015), 1455--1482.


\bibitem{[TX]}
G. Tian and Z. Xin, Gradient estimation on Navier-Stokes equations, {\it Comm. Anal. Geom.,}  \textbf{7}  (1999),  221--257.

\bibitem{[Vasseur]}A. Vasseur,    A new proof of partial regularity of
solutions to Navier-Stokes equations, {\it    NoDEA Nonlinear Differential Equations Appl.,}    \textbf{14} (2007), 753--785.

\bibitem{[Vasseur1]}
\bysame. Higher derivatives estimate for the 3D Navier-Stokes equation. {\it
Ann. Inst. H. Poincar\'e Anal. Non Lin\'eaire, } \textbf{27} (2010),  1189--1204.

\bibitem{[WZ]}
W. Wang and Z. Zhang,   On the interior regularity criteria and the number of singular points to the Navier-Stokes equations.   {\it J. Anal. Math.,}  \textbf{123} (2014), 139--170.

 \bibitem{[WW]}Y. Wang and G. Wu,
A unified proof on the partial regularity for  suitable weak solutions of non-stationary and  stationary  Navier-Stokes equations.  {\it  J. Differential Equations.,}    \textbf{256} (2014),  1224--1249.














\end{thebibliography}
\end{document}